\documentclass[11pt]{article}

\usepackage{authblk,amsmath,amsfonts,amssymb,dsfont,fullpage,hyperref}
\usepackage[amsmath,thmmarks,thref]{ntheorem}
\usepackage[round]{natbib}
\def\E{\mathbb{E}}
\def\R{\mathbb{R}}
\def\Sig{\varSigma}
\def\I{\mathds{1}}
\def\k{\bar{k}}
\DeclareMathOperator{\diag}{diag}
\DeclareMathOperator{\rank}{rank}
\DeclareMathOperator{\tr}{tr}

\def\qed{\ensuremath{\square}}
\makeatletter
\newtheoremstyle{plain-theorem}%
{\item[\hskip\labelsep \theorem@headerfont ##1\ ##2\theorem@separator]}%
{\item[\hskip\labelsep \theorem@headerfont ##1\ ##2\ {\normalfont({##3})}\theorem@separator]}%
\newtheoremstyle{nonumberplain-proof}%
{\item[\hskip\labelsep \theorem@headerfont ##1\theorem@separator]}%
{\item[\hskip\labelsep \theorem@headerfont ##3\theorem@separator]}%
\makeatother
\theoremnumbering{arabic}
\theoremstyle{plain-theorem}
\theorembodyfont{\itshape}
\theoremheaderfont{\normalfont\bfseries}
\theoremseparator{.}
\newtheorem{theorem}{Theorem}
\newtheorem{lemma}{Lemma}

\theorembodyfont{\normalfont}
\theoremsymbol{\qed}
\newtheorem{example}{Example}
\newtheorem{remark}{Remark}
\theoremstyle{nonumberplain-proof}
\theorembodyfont{\normalfont}
\theoremheaderfont{\itshape}
\theoremsymbol{\qed}
\newtheorem{proof}{Proof}

\title{Dimension-free tail inequalities for sums of random matrices
}
\author[1,2]{Daniel Hsu}
\author[2]{Sham M.~Kakade}
\author[1]{Tong Zhang}
\affil[1]{Department of Statistics, Rutgers University}
\affil[2]{Department of Statistics, Wharton School, University of Pennsylvania}

\begin{document}
\maketitle
{\def\thefootnote{}
\footnotetext{E-mail: \texttt{djhsu@rci.rutgers.edu},
\texttt{skakade@wharton.upenn.edu}, \texttt{tzhang@stat.rutgers.edu}}}

\begin{abstract}
We derive exponential tail inequalities for sums of random matrices with no
dependence on the explicit matrix dimensions.
These are similar to the matrix versions of the Chernoff bound and
Bernstein inequality except with the explicit matrix dimensions replaced by
a trace quantity that can be small even when the dimension is large or
infinite.
Some applications to principal component analysis and approximate matrix
multiplication are given to illustrate the utility of the new bounds.
\end{abstract}

\section{Introduction}

Sums of random matrices arise in many statistical and probabilistic
applications, and hence their concentration behavior is of significant
interest.
Surprisingly, the classical exponential moment method used to derive tail
inequalities for scalar random variables carries over to the matrix setting
when augmented with certain matrix trace inequalities.
This fact was first discovered by~\citet{AhlWin02}, who proved a matrix
version of the Chernoff bound using the Golden-Thompson
inequality~\citep{Golden65,Thompson65}: $\tr \exp(A+B) \leq
\tr(\exp(A)\exp(B))$ for all symmetric matrices $A$ and $B$.
Later, it was demonstrated that the same technique could be adapted
to yield analogues of other tail bounds such as Bernstein's
inequality~\citep{GLFBE10,Recht09,Gross09,Oli10a,Oli10b}.
Recently, a theorem due to~\citet{Lieb73} was identified
by~\citet{Tropp11a,Tropp11b} to yield sharper versions of this general
class of tail bounds.
Altogether, these results have proved invaluable in constructing and
simplifying many probabilistic arguments concerning sums of random
matrices.

One deficiency of these previous inequalities is their explicit dependence
on the dimension, which prevents their application to infinite dimensional
spaces that arise in a variety of data analysis
tasks~\citep[\emph{e.g.},][]{SSM99,RasWil06,FBG07,Bach08}.
In this work, we prove analogous results where the dimension is replaced
with a trace quantity that can be small even when the dimension is large or
infinite.
For instance, in our matrix generalization of Bernstein's inequality, the
(normalized) trace of the second moment matrix appears instead of the
matrix dimension.
Such trace quantities can often be regarded as an intrinsic notion of
dimension.
The price for this improvement is that the more typical exponential tail
$e^{-t}$ is replaced with a slightly weaker tail $t (e^t-t-1)^{-1} \approx
e^{-t + \log t}$.
As $t$ becomes large, the difference becomes negligible.
For instance, if $t \geq 2.6$, then $t (e^t-t-1)^{-1} \leq e^{-t/2}$.

There are some previous works that give dimension-free tail inequalities in
some special cases.
\citet{RV07} prove exponential tail inequalities for sums of rank-one
matrices by way of a key inequality of~\citet{Rudelson99} \citep[see
also][]{Oli10a}.
\citet{MagZou11} prove tail inequalities for sums of low-rank matrices using
non-commutative Khintchine moment inequalities, but fall short of giving an
exponential tail inequality.
In contrast, our results are proved using a natural matrix generalization
of the exponential moment method.

\section{Preliminaries}

Let $\xi_1,\dotsc,\xi_n$ be random variables, and for each $i=1,\dotsc,n$,
let $X_i := X_i(\xi_1,\dotsc,\xi_i)$ be a symmetric matrix-valued
functional of $\xi_1,\dotsc,\xi_i$.
We use $\E_i[ \ \cdot \ ]$ and shorthand for $\E[ \ \cdot \ | \
\xi_1,\dotsc,\xi_{i-1} ]$.
For any symmetric matrix $H$, let $\lambda_{\max}(H)$ denote its largest
eigenvalue, $\exp(H) := I + \sum_{k=1}^\infty H^k / k!$, and $\log(\exp(H))
:= H$.

The following convex trace inequality of~\citet{Lieb73} was also used
by~\citet{Tropp11a,Tropp11b}.
\begin{theorem}[\citealp*{Lieb73}] \label{theorem:lieb}
For any symmetric matrix $H$, the function $M \mapsto \tr\exp(H + \log(M))$
is concave in $M$ for $M \succ 0$.
\end{theorem}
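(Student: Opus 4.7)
The strategy is to reduce the claim to a classical joint-concavity theorem for matrix trace functions, via the Lie--Trotter product formula. This decomposes the argument into a transparent analytic reduction (easy) and a single deep ingredient (hard).

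\textbf{Step 1 (Reduction via Lie--Trotter).} The plan is to use the Lie--Trotter product formula
\[
\exp(H + \log M) \;=\; \lim_{n \to \infty} \bigl(e^{H/n}\, M^{1/n}\bigr)^{n}.
\]
Taking traces and invoking continuity,
\[
\tr\exp(H + \log M) \;=\; \lim_{n \to \infty} f_n(M), \qquad f_n(M) \;:=\; \tr\bigl[(e^{H/n}\, M^{1/n})^{n}\bigr].
\]
Since a pointwise limit of concave functions is concave, it then suffices to show that each $f_n$ is concave on the cone $\{M : M \succ 0\}$.

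\textbf{Step 2 (Applying Lieb's joint concavity theorem).} The second step invokes Lieb's joint concavity theorem: for any fixed matrix $K$ and any $p, q \ge 0$ with $p + q \le 1$, the map $(A, B) \mapsto \tr(K^* A^p K B^q)$ is jointly concave in $(A, B)$ on positive definite pairs. For $n = 2$, setting $K := e^{H/2}$ (which is symmetric) and $p = q = 1/2$, we get $f_2(M) = \tr(K^* M^{1/2} K M^{1/2})$ as this jointly concave function specialized to $A = B = M$, so $f_2$ is concave. For $n \ge 3$, one needs an $n$-matrix generalization of Lieb's joint concavity (also due to Lieb); it can be obtained by induction from the two-matrix version by folding adjacent factors and applying the theorem iteratively, and then specializing $A_1 = \cdots = A_n = M$ recovers $f_n$.

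\textbf{Step 3 (The hard part).} The main obstacle is Lieb's joint concavity theorem itself, which is genuinely deep -- it is the core analytic content, and naïve approaches fail. (For example, operator concavity of $\log$ combined with trace-monotonicity of $\exp$ yields only the wrong direction, since $A \mapsto \tr e^A$ is convex, not concave.) Several proofs are available: Lieb's original via complex interpolation, Epstein's compact complex-analytic argument applying Hadamard's three-lines theorem to $z \mapsto \tr(K^* A^z K B^{1-z})$ on the strip $\{0 \le \Re z \le 1\}$, Ando's proof via the matrix geometric mean, and a more recent route through the joint convexity of the quantum relative entropy (Lindblad). I would present Epstein's proof, as it is the most self-contained: verify boundedness and holomorphicity of the trace in $z$ on the strip, check the boundary estimates on $\Re z = 0$ and $\Re z = 1$, and conclude the interpolation inequality that translates to joint concavity.
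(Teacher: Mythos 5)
The paper does not prove this theorem at all---it is quoted as a black box from \citet{Lieb73}---so the only question is whether your argument stands on its own. It has a genuine gap at the crux. The Lie--Trotter reduction and the $n=2$ case are fine, but for $n\ge 3$ you need concavity of $M \mapsto \tr\bigl[(e^{H/n}M^{1/n})^n\bigr]$, and the route you propose---an ``$n$-matrix generalization of Lieb's joint concavity \ldots obtained by induction from the two-matrix version by folding adjacent factors''---does not work. The two-matrix theorem concerns $\tr(K^*A^pKB^q)$, with exactly two constant factors interspersed; the $n$-fold polarized expression $\tr(KA_1^{1/n}KA_2^{1/n}\cdots KA_n^{1/n})$ has $n$ interspersed constants, and grouping adjacent factors never reduces it to the two-matrix form, because a block like $KA_i^{1/n}KA_{i+1}^{1/n}$ is not a fractional power of a single positive variable and cannot be absorbed into the constant $K$'s. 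Joint concavity of such multivariate interspersed products is a genuinely delicate matter (closely related naive multivariate extensions, e.g.\ of Golden--Thompson to three matrices, are false), so this step cannot be waved through. Your Step 3 compounds the problem: the three-lines interpolation argument you cite proves the \emph{two}-matrix Lieb concavity, which only covers $n=2$ in your scheme.

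The strategy is salvageable, but with a different ingredient. By cyclicity of the trace, $\tr\bigl[(e^{H/n}M^{1/n})^n\bigr] = \tr\bigl[(e^{H/2n}M^{1/n}e^{H/2n})^n\bigr] = \tr\bigl[(BM^pB^*)^{1/p}\bigr]$ with $B = e^{H/2n}$ and $p = 1/n$, and Epstein's theorem states that $A \mapsto \tr\bigl[(BA^pB^*)^{1/p}\bigr]$ is concave for $0 < p \le 1$; combined with symmetric Lie--Trotter this yields the claim. But Epstein's theorem is itself as deep as the target statement (it is proved by Herglotz/Pick-function or complex-interpolation methods, not by the two-matrix Lieb theorem), so you would simply be trading one deep input for another rather than completing a proof. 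Alternatively, the cleanest modern route is Tropp's: write $\tr\exp(H+\log M) = \sup_{X\succ 0}\{\tr(XH) + \tr X - \tr(X\log X) + \tr(X\log M)\}$ and use Lindblad's joint convexity of quantum relative entropy, so that a partial supremum of a jointly concave function gives the result. As written, your proof is incomplete for every $n \ge 3$, which is to say for the entire limit.
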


The following lemma due to \citep{Tropp11b} is a matrix generalization of a
scalar result due to~\citet{Freedman75} \citep[see also][]{Zhang05a}, where
the key is the invocation of Theorem~\ref{theorem:lieb}.
We give the proof for completeness.
\begin{lemma}[\citealp*{Tropp11b}] \label{lemma:laplace}
For any constant symmetric matrix $X_0$,
\begin{equation} \label{eq:laplace}
\E\left[ \tr\exp\left( \sum_{i=0}^n X_i
- \sum_{i=1}^n \ln \E_i\left[\exp(X_i)\right] \right) \right]
\leq \tr\exp(X_0)
.
\end{equation}
\end{lemma}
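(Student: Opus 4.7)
The plan is to prove the inequality by induction on $n$, peeling off one term at a time using Lieb's concavity theorem together with Jensen's inequality. The base case $n=0$ is trivial since the sum then reduces to $X_0$ and both sides equal $\tr\exp(X_0)$.

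For the inductive step, abbreviate $S_k := \sum_{i=0}^k X_i - \sum_{i=1}^k \ln\E_i[\exp(X_i)]$ and $Z_k := \ln\E_k[\exp(X_k)]$, so that $S_n = S_{n-1} + X_n - Z_n$. The key observation is that I can rewrite $S_n$ in the form $H + \log M$, where $H$ is $\mathcal{F}_{n-1}$-measurable (i.e.\ ``deterministic'' from the viewpoint of $\E_n$) and $M \succ 0$ is the random ingredient. Specifically, taking
\[
H := S_{n-1} - Z_n, \qquad M := \exp(X_n) \succ 0,
\]
one has $H + \log M = S_{n-1} - Z_n + X_n = S_n$, and $H$ depends only on $\xi_1,\dotsc,\xi_{n-1}$ since $Z_n$ is a conditional expectation given $\xi_1,\dotsc,\xi_{n-1}$.

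Now conditional on $\xi_1,\dotsc,\xi_{n-1}$, Theorem~\ref{theorem:lieb} says that $M \mapsto \tr\exp(H + \log M)$ is concave. Applying Jensen's inequality to the conditional expectation $\E_n$ therefore gives
\[
\E_n\bigl[\tr\exp(S_n)\bigr]
= \E_n\bigl[\tr\exp(H + \log M)\bigr]
\leq \tr\exp\bigl(H + \log \E_n[M]\bigr)
= \tr\exp(S_{n-1} - Z_n + Z_n)
= \tr\exp(S_{n-1}),
\]
where the last step uses the defining identity $Z_n = \log \E_n[\exp(X_n)]$. Taking the outer expectation, the tower property yields $\E[\tr\exp(S_n)] \leq \E[\tr\exp(S_{n-1})]$, and the inductive hypothesis closes the argument.

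The main obstacle is not the induction itself but recognizing the right conditional decomposition $S_n = H + \log M$ that lets Lieb's concavity apply: one must split off exactly the ``random'' piece $\log M = X_n$ while absorbing the compensator $-Z_n$ into the deterministic part $H$. Once this split is in place, the rest is a clean application of Jensen's inequality and iteration.
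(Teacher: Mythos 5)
Your proof is correct and follows essentially the same route as the paper: induction on $n$, with the inductive step carried out by writing the exponent as $H + \log M$ with $H = S_{n-1} - Z_n$ measurable with respect to $\xi_1,\dotsc,\xi_{n-1}$ and $M = \exp(X_n)$, then applying Theorem~\ref{theorem:lieb} and Jensen's inequality under $\E_n$. The cancellation $-Z_n + \log\E_n[\exp(X_n)] = 0$ and the appeal to the inductive hypothesis match the paper's argument exactly.
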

\begin{proof}
By induction on $n$.
The claim holds trivially for $n = 0$.
Now fix $n \geq 1$, and assume as the inductive hypothesis
that~\eqref{eq:laplace} holds with $n$ replaced by $n-1$.
In this case,
\begin{align*}
\E\left[ \tr\exp\left( \sum_{i=0}^n X_i - \sum_{i=1}^n \log
\E_i\left[\exp(X_i)\right] \right)\right]
& = \E\left[\E_n\left[ \tr\exp\left( \sum_{i=0}^{n-1} X_i - \sum_{i=1}^n
\log \E_i\left[\exp(X_i)\right] + \log \exp(X_n) \right)\right]\right]
\\
& \leq \E\left[\tr\exp\left( \sum_{i=0}^{n-1} X_i -
\sum_{i=1}^n \log \E_i\left[\exp(X_i)\right]
+ \log\E_n\left[\exp(X_n)\right] \right)\right]
\\
& = \E\left[\tr\exp\left( \sum_{i=0}^{n-1} X_i -
\sum_{i=1}^{n-1} \log \E_i\left[\exp(X_i)\right] \right)\right]
\\
& \leq \tr\exp(X_0)
\end{align*}
where the first inequality follows from Theorem~\ref{theorem:lieb} and
Jensen's inequality, and the second inequality follows from the inductive
hypothesis.
\end{proof}

\section{Exponential tail inequalities for sums of random matrices}

\subsection{A generic inequality}

We first state a generic inequality based on Lemma~\ref{lemma:laplace}.
This differs from earlier approaches, which instead combine Markov's
inequality with a result similar to
Lemma~\ref{lemma:laplace}~\citep[\emph{e.g.},][Theorem 3.6]{Tropp11a}.
\begin{theorem} \label{theorem:laplace-prob}
For any $\eta \in \R$ and any $t > 0$,
\begin{equation*}
\Pr\left[ \lambda_{\max}\left(
\eta \sum_{i=1}^n X_i - \sum_{i=1}^n \log \E_i\left[\exp(\eta X_i)\right]
\right) > t \right]
\leq
\tr\left(\E\left[ - \eta \sum_{i=1}^n X_i +
\sum_{i=1}^n \log
\E_i\left[ \exp(\eta X_i) \right] \right]
\right)
\cdot (e^t-t-1)^{-1}
.
\end{equation*}
\end{theorem}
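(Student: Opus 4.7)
The plan is to combine Lemma~\ref{lemma:laplace} with a Markov-style argument, but using a shifted test function chosen so that the ambient dimension cancels out. Let $Y := \eta\sum_{i=1}^n X_i - \sum_{i=1}^n \log \E_i[\exp(\eta X_i)]$, so that the target is $\Pr[\lambda_{\max}(Y) > t]$. A naive Markov on $\exp(\lambda_{\max}(Y))$ would yield $\E[\tr\exp(Y)]/e^t$, and Lemma~\ref{lemma:laplace} (applied with $X_0 = 0$ and the matrices $\eta X_i$ in place of $X_i$) bounds that numerator by $\tr\exp(0) = d$---precisely the dimension factor we wish to eliminate. The fix is to use the function $g(x) := e^x - 1 - x$, which is nonnegative, vanishes to second order at $0$, and is strictly increasing on $[0,\infty)$; it is tuned so that we can subtract away both the constant ($I$) and linear ($Y$) contributions to $\tr\exp(Y)$ before taking expectations.

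First I would establish the spectral inequality $\tr(\exp(Y) - I - Y) \geq g(\lambda_{\max}(Y))$: diagonalizing $Y$, the left-hand side equals $\sum_j g(\lambda_j(Y))$, a sum of nonnegative terms (since $g \geq 0$ everywhere), hence at least $g(\lambda_{\max}(Y))$. Because $g$ is monotone increasing on $[0,\infty)$ and $t > 0$, the event $\{\lambda_{\max}(Y) > t\}$ is contained in $\{g(\lambda_{\max}(Y)) > g(t)\}$, so Markov's inequality yields
\[
\Pr[\lambda_{\max}(Y) > t]
\ \leq \ \frac{\E[g(\lambda_{\max}(Y))]}{g(t)}
\ \leq \ \frac{\E[\tr(\exp(Y) - I - Y)]}{e^t - t - 1}.
\]

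Finally I would expand the numerator by linearity of trace and expectation and invoke Lemma~\ref{lemma:laplace} with $X_0 = 0$ and $\eta X_i$ in place of $X_i$, which gives $\E[\tr\exp(Y)] \leq \tr\exp(0) = d$. Then
\[
\E[\tr(\exp(Y) - I - Y)]
\ = \ \E[\tr\exp(Y)] - d - \E[\tr Y]
\ \leq \ -\E[\tr Y]
\ = \ \tr\E[-Y],
\]
which is precisely the trace quantity on the right-hand side of the theorem. The main obstacle is identifying the right test function: using $e^x$ would reinstate the dimension, whereas $g(x) = e^x - 1 - x$ is essentially the unique polynomial correction that lets $d$ cancel while still leaving a rapidly growing denominator $g(t) = e^t - t - 1$. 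Everything else is routine bookkeeping, and the argument is indifferent to the sign of $\eta$.
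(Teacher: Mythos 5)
Your argument is correct and is essentially identical to the paper's own proof: the same test function $g(x)=e^x-x-1$, the same spectral bound $\tr(\exp(Y)-I-Y)\geq g(\lambda_{\max}(Y))$, the same Markov step, and the same invocation of Lemma~\ref{lemma:laplace}. The only difference is that the paper keeps a generic $X_0$ and sends $X_0\to 0$ at the end so that only the trace-class quantity $\tr(\exp(X_0)-I)$ ever appears, whereas your bookkeeping step $\E[\tr\exp(Y)]-d-\E[\tr Y]$ writes $\tr(I)=d$ explicitly and would be an $\infty-\infty$ expression in the infinite-dimensional settings the theorem is meant to cover.
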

\begin{proof}
Fix a constant matrix $X_0$, and let $A := \eta \sum_{i=0}^n X_i -
\sum_{i=1}^n \log \E_i[\exp(\eta X_i)]$.
Note that $g(x) := e^x - x - 1$ is non-negative for all $x \in \R$ and
increasing for $x \geq 0$.
Letting $\{ \lambda_i(A) \}$ denote the eigenvalues of $A$, we have
\begin{align*}
\Pr\left[ \lambda_{\max}(A) > t \right] (e^t - t - 1)
& = \E\left[ \I\bigl[ \lambda_{\max}(A) > t \bigr] (e^t - t - 1) \right]
\\
& \leq \E\left[ e^{\lambda_{\max}(A)} - \lambda_{\max}(A) - 1 \right] \\
& \leq \E\left[ \sum_i \bigl( e^{\lambda_i(A)} - \lambda_i(A) - 1 \bigr)
\right] \\
& = \E\left[ \tr(\exp(A) - A - I) \right] \\
& \leq \tr(\exp(X_0) + \E[-A] - I)
\end{align*}
where the last inequality follows from Lemma~\ref{lemma:laplace}.
Now we take $X_0 \to 0$ so $\tr(\exp(X_0) - I) \to 0$.
\end{proof}

%
\subsection{Some specific bounds}

We now give some specific bounds as corollaries of
Theorem~\ref{theorem:laplace-prob}.
Most of the estimates used in the proofs are taken from previous
works~\citep[\emph{e.g.},][]{AhlWin02,Tropp11a}; the main point here is to
show how these previous techniques can be combined with
Theorem~\ref{theorem:laplace-prob} to yield new tail inequalities with no
explicit dependence on the matrix dimension.

First, we give a bound under a subgaussian-type condition on the
distribution.
\begin{theorem}[Matrix subgaussian bound] \label{theorem:subgaussian}
If there exists $\bar\sigma > 0$ and $\k > 0$ such that
for all $i=1,\dotsc,n$,
\begin{gather*}
\E_i[X_i] = 0 \\
\lambda_{\max}\left(
\frac1n
\sum_{i=1}^n \log \E_i\bigl[ \exp(\eta X_i) \bigr]
\right)
\leq \frac{\eta^2\bar\sigma^2}{2} \\
\E\left[ \tr\left(
\frac1n
\sum_{i=1}^n \log \E_i\bigl[ \exp(\eta X_i) \bigr]
\right)
\right]
\leq \frac{\eta^2\bar\sigma^2\k}{2}
\end{gather*}
for all $\eta > 0$ almost surely, then for any $t > 0$,
\begin{equation*}
\Pr\left[ \lambda_{\max}\left(\frac1n\sum_{i=1}^n X_i\right) >
\sqrt{\frac{2\bar\sigma^2 t}{n}} \right]
\leq \k \cdot t(e^t - t - 1)^{-1}
.
\end{equation*}
\end{theorem}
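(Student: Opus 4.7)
The plan is to apply Theorem~\ref{theorem:laplace-prob} to a scaled sum with a carefully chosen $\eta$, and then convert the event inside $\lambda_{\max}$ into the one stated in the theorem by subadditivity of the maximum eigenvalue.

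First, I would fix some $\eta > 0$ (to be chosen) and invoke Theorem~\ref{theorem:laplace-prob} with threshold $t$. Since $\E_i[X_i] = 0$, the numerator on the right-hand side simplifies to $\tr \E[\sum_{i=1}^n \log \E_i[\exp(\eta X_i)]]$, which by the third hypothesis is bounded by $n \eta^2 \bar\sigma^2 \k / 2$. So the conclusion of the applied theorem reads
\begin{equation*}
\Pr\Bigl[ \lambda_{\max}\Bigl( \eta \sum_{i=1}^n X_i - \sum_{i=1}^n \log \E_i[\exp(\eta X_i)] \Bigr) > t \Bigr]
\leq \frac{n \eta^2 \bar\sigma^2 \k}{2} \cdot (e^t - t - 1)^{-1}.
\end{equation*}

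Next I would relate the event above to the tail event for $\lambda_{\max}(\tfrac1n \sum_i X_i)$. By Weyl's inequality, for any symmetric matrices $A, B$ one has $\lambda_{\max}(A - B) \geq \lambda_{\max}(A) - \lambda_{\max}(B)$. Applying this with $A = \eta \sum_i X_i$ and $B = \sum_i \log \E_i[\exp(\eta X_i)]$, and invoking the second hypothesis to bound $\lambda_{\max}(B) \leq n \eta^2 \bar\sigma^2 / 2$, gives
\begin{equation*}
\lambda_{\max}\Bigl( \eta \sum_{i=1}^n X_i - \sum_{i=1}^n \log \E_i[\exp(\eta X_i)] \Bigr) \geq \eta \cdot \lambda_{\max}\Bigl(\sum_{i=1}^n X_i\Bigr) - \frac{n \eta^2 \bar\sigma^2}{2}.
\end{equation*}
Hence the event $\{\lambda_{\max}(\sum_i X_i) > s\}$ is contained in the event of the applied theorem provided $\eta s - n\eta^2 \bar\sigma^2 / 2 \geq t$.

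Finally, to make $s = n \sqrt{2\bar\sigma^2 t / n} = \sqrt{2\bar\sigma^2 t n}$ work with equality, I would pick $\eta = \sqrt{2t / (n \bar\sigma^2)}$; then $n\eta^2\bar\sigma^2/2 = t$ and $\eta s = 2t$, so $\eta s - n\eta^2\bar\sigma^2/2 = t$ exactly, and moreover the prefactor $n\eta^2 \bar\sigma^2 \k/2$ reduces to $\k t$. Combining the two displayed inequalities yields $\Pr[\lambda_{\max}(\tfrac1n\sum_i X_i) > \sqrt{2\bar\sigma^2 t/n}] \leq \k \cdot t (e^t - t - 1)^{-1}$, as desired.

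I do not anticipate a serious obstacle; the only point requiring care is the Weyl-type splitting of $\lambda_{\max}$ (since one cannot directly split the max eigenvalue of a sum as the sum of max eigenvalues without an inequality in the right direction), and the bookkeeping that ensures both the $\lambda_{\max}$ hypothesis on $\sum_i \log \E_i[\exp(\eta X_i)]$ and the $\tr$ hypothesis are used in the correct places: the former to absorb the cumulant term into the event, the latter to bound the constant that multiplies $(e^t-t-1)^{-1}$.
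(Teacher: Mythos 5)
Your proposal is correct and follows essentially the same route as the paper: the same choice $\eta = \sqrt{2t/(\bar\sigma^2 n)}$, the trace hypothesis to bound the prefactor by $\bar{k}t$, and the $\lambda_{\max}$ hypothesis to absorb the cumulant term into the event. The only cosmetic difference is that you invoke Weyl's inequality (subadditivity of $\lambda_{\max}$) where the paper derives the same fact inline via Rayleigh quotients.
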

\begin{proof}
We fix $\eta := \sqrt{2t/(\bar\sigma^2 n)}$.
By Theorem~\ref{theorem:laplace-prob}, we obtain
\begin{align*}
\Pr\left[ \lambda_{\max}\left(
\frac1n\sum_{i=1}^n X_i
- \frac1{n\eta} \sum_{i=1}^n \log \E_i\left[\exp(\eta X_i)\right]
\right) > \frac{t}{n\eta} \right]
& \leq
\tr\left(\E\left[
\sum_{i=1}^n \log
\E_i\left[ \exp(\eta X_i) \right] \right]
\right)
\cdot (e^t-t-1)^{-1}
\\
& \leq
\frac{n\eta^2\bar\sigma^2\k}{2}
\cdot (e^t-t-1)^{-1}
\\
& = \k \cdot t(e^t-t-1)^{-1}
.
\end{align*}
Now suppose
\begin{equation*}
\lambda_{\max}\left(
\frac1n\sum_{i=1}^n X_i
- \frac1{n\eta} \sum_{i=1}^n \log \E_i\left[\exp(\eta X_i)\right]
\right) \leq \frac{t}{n\eta}
.
\end{equation*}
This implies for every non-zero vector $u$,
\begin{equation*}
\frac{u^\top\left( \frac1n\sum_{i=1}^n X_i \right) u}{u^\top u}
\leq \frac{u^\top\left(
\frac1{n\eta} \sum_{i=1}^n \log \E_i\left[\exp(\eta X_i)\right]
\right) u}{u^\top u}
+ \frac{t}{n\eta}
\leq
\lambda_{\max}\left(
\frac1{n\eta} \sum_{i=1}^n \log \E_i\left[\exp(\eta X_i)\right]
\right)
+ \frac{t}{n\eta}
\end{equation*}
and therefore
\begin{equation*}
\lambda_{\max}\left( \frac1n\sum_{i=1}^n X_i \right)
\leq
\lambda_{\max}\left(
\frac1{n\eta} \sum_{i=1}^n \log \E_i\left[\exp(\eta X_i)\right]
\right)
+ \frac{t}{n\eta}
\leq \frac{\eta\bar\sigma^2}{2} + \frac{t}{n\eta}
= \sqrt{\frac{2\bar\sigma^2t}{n}}
\end{equation*}
as required.
\end{proof}

We can also give a Bernstein-type bound based on moment conditions.
For simplicity, we just state the bound in the case that the
$\lambda_{\max}(X_i)$ are bounded almost surely.
\begin{theorem}[Matrix Bernstein bound] \label{theorem:bernstein}
If there exists $\bar b > 0$, $\bar\sigma > 0$, and $\k > 0$ such that
for all $i=1,\dotsc,n$,
\begin{gather*}
\E_i[X_i] = 0 \\
\lambda_{\max}(X_i) \leq \bar b \\
\lambda_{\max}\left( \frac1n \sum_{i=1}^n \E_i[X_i^2] \right)
\leq \bar\sigma^2 \\
\E\left[ \tr\left(\frac1n\sum_{i=1}^n \E_i[X_i^2] \right) \right]
\leq \bar\sigma^2\k
\end{gather*}
almost surely, then for any $t > 0$,
\begin{equation*}
\Pr\left[
\lambda_{\max}\left( \frac1n \sum_{i=1}^n X_i \right)
> \sqrt{\frac{2\bar\sigma^2t}{n}} + \frac{\bar bt}{3n}
\right]
\leq \k \cdot t (e^t - t - 1)^{-1}
.
\end{equation*}
\end{theorem}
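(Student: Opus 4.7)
The plan is to parallel the proof of Theorem~\ref{theorem:subgaussian}, replacing the subgaussian moment control with a Bernstein-type PSD moment estimate. First I would establish: for any $\eta \in (0, 3/\bar b)$ and any $i$,
\[
  \log \E_i[\exp(\eta X_i)] \preceq \frac{\eta^2/2}{1 - \eta\bar b/3}\,\E_i[X_i^2].
\]
This is obtained by lifting the scalar inequality $e^x - 1 - x \leq \frac{x^2/2}{1 - x/3}$ (valid for $x < 3$) to matrices via the spectral calculus applied to $\eta X_i$, whose spectrum lies in $(-\infty,\eta\bar b]$; this gives $\exp(\eta X_i) \preceq I + \eta X_i + \frac{\eta^2/2}{1 - \eta\bar b/3} X_i^2$. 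Conditional expectation kills the linear term (since $\E_i[X_i] = 0$), and operator monotonicity of $\log$ together with $\log(I + A) \preceq A$ closes out the estimate.

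The last two hypotheses then translate directly into
\[
  \lambda_{\max}\!\Bigl(\tfrac1n \sum_{i=1}^n \log\E_i[\exp(\eta X_i)]\Bigr)
  \;\leq\; \frac{\eta^2\bar\sigma^2/2}{1-\eta\bar b/3},
  \qquad
  \E\Bigl[\tr\!\bigl(\tfrac1n \sum_{i=1}^n \log\E_i[\exp(\eta X_i)]\bigr)\Bigr]
  \;\leq\; \frac{\eta^2\bar\sigma^2\k/2}{1-\eta\bar b/3}.
\]
I would plug these into Theorem~\ref{theorem:laplace-prob}, choosing $\eta$ so that the (expected) trace quantity equals exactly $\k t$, that is, so that $\eta$ solves the quadratic $n\eta^2\bar\sigma^2/2 = t(1 - \eta\bar b/3)$. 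Then the probability bound is $\k \cdot t(e^t - t - 1)^{-1}$, and by the same eigenvector argument as in Theorem~\ref{theorem:subgaussian}, on the complementary event one has
\[
  \lambda_{\max}\!\Bigl(\tfrac1n\sum_{i=1}^n X_i\Bigr)
  \;\leq\; \frac{\eta\bar\sigma^2/2}{1-\eta\bar b/3} + \frac{t}{n\eta}
  \;=\; \frac{2t}{n\eta},
\]
where the second equality is exactly the defining relation for $\eta$.

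It then remains to solve the quadratic and rewrite $2t/(n\eta)$ in the advertised form. Explicitly, $\eta = \bigl(\sqrt{t^2\bar b^2/9 + 2nt\bar\sigma^2} - t\bar b/3\bigr)/(n\bar\sigma^2)$; rationalizing and applying the elementary estimate $\sqrt{a^2 + b} \leq a + \sqrt{b}$ (for $a,b \geq 0$) yields the advertised $\sqrt{2\bar\sigma^2 t/n} + \bar b t/(3n)$. The main obstacle is the PSD lifting of the scalar Bernstein inequality, which requires care about operator orderings and the spectral restriction $\eta\bar b < 3$; once that lemma is in hand, the rest is mechanical, the only new wrinkle relative to the subgaussian case being that balancing the exponent now calls for solving a quadratic in $\eta$ rather than taking a simple square root.
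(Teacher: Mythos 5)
Your overall route is the same as the paper's: the operator estimate $\log\E_i[\exp(\eta X_i)] \preceq \frac{\eta^2/2}{1-\eta\bar b/3}\E_i[X_i^2]$ for $0<\eta<3/\bar b$ (the paper gets it in two steps, via $\exp(\eta X_i)\preceq I+\eta X_i+\frac{e^{\eta\bar b}-\eta\bar b-1}{\bar b^2}X_i^2$ and then $e^x-x-1\le x^2/(2(1-x/3))$, but this is the same estimate), followed by Theorem~\ref{theorem:laplace-prob} and the same eigenvector argument. The gap is in your choice of $\eta$ and the final algebra. If you pick $\eta$ so that $\frac{n\eta^2\bar\sigma^2/2}{1-\eta\bar b/3}=t$, the deviation threshold you actually control is
\[
\frac{2t}{n\eta}=\frac{t\bar b/3+\sqrt{t^2\bar b^2/9+2n\bar\sigma^2 t}}{n}
\;>\;\sqrt{\frac{2\bar\sigma^2 t}{n}}+\frac{\bar b t}{3n},
\]
and after applying $\sqrt{a^2+b}\le a+\sqrt{b}$ you get $\sqrt{2\bar\sigma^2 t/n}+\tfrac{2\bar b t}{3n}$, not the advertised $\sqrt{2\bar\sigma^2 t/n}+\tfrac{\bar b t}{3n}$. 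Since your threshold strictly exceeds the theorem's, the event whose probability you bound is strictly smaller, so you have proved a weaker statement (off by a factor of $2$ on the lower-order term).

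The fix is to optimize the deviation rather than force the trace term to equal $\k t$ exactly. With $s:=\sqrt{2\bar\sigma^2 t/n}$ and $c:=\bar b t/(3n)$, the paper takes $\eta=\frac{2t/n}{2c+s}$ (equivalently the expression in the proof), for which $1-\eta\bar b/3=\frac{s}{2c+s}$, hence $\frac{\eta\bar\sigma^2}{2(1-\eta\bar b/3)}=\frac{s}{2}$ and $\frac{t}{n\eta}=c+\frac{s}{2}$, so the threshold is exactly $s+c$ as claimed; the probability coefficient then comes out to $\frac{s}{2c+s}\cdot\k t\le\k t$, so nothing is lost on that side. Everything else in your outline (the PSD lifting, the use of $\E_i[X_i]=0$ and $\log(I+A)\preceq A$, the restriction $\eta\bar b<3$, and the translation of the two variance hypotheses) matches the paper and is sound.
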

\begin{proof}
Let $\eta > 0$.
For each $i=1,\dotsc,n$,
\begin{equation*}
\exp(\eta X_i) \preceq I + \eta X_i + \frac{e^{\eta\bar b} - \eta\bar b -
1}{\bar b^2} \cdot X_i^2
\end{equation*}
and therefore
\begin{equation*}
\log\E_i\bigl[\exp(\eta X_i)\bigr]
\preceq \frac{e^{\eta\bar b} - \eta\bar b -
1}{\bar b^2} \cdot \E_i\bigl[X_i^2\bigr]
.
\end{equation*}
Since $e^x-x-1 \leq x^2/(2(1-x/3))$ for $0 \leq x < 3$, we have by
Theorem~\ref{theorem:laplace-prob}
\begin{equation*}
\Pr\left[
\lambda_{\max}\left( \frac1n \sum_{i=1}^n X_i \right)
> \frac{\eta\bar\sigma^2}{2(1 - \eta\bar b/3)} + \frac{t}{\eta n}
\right]
\leq \frac{\eta^2\bar\sigma^2\k n}{2(1-\eta\bar b/3)}
\cdot (e^t - t - 1)^{-1}
\end{equation*}
provided that $\eta < 3/\bar b$.
Choosing
\begin{equation*}
\eta := \frac{3}{\bar b} \cdot
\left( 1 - \frac{\sqrt{2\bar\sigma^2t/n}}
{2\bar bt/(3n) + \sqrt{2\bar\sigma^2t/n}} \right)
\end{equation*}
gives the desired bound.
\end{proof}

\subsection{Discussion} \label{section:discussion}

The advantage of our results here over previous exponential tail
inequalities for sums of random matrices is the absence of explicit
dependence on the matrix dimensions.
Indeed, all previous tail inequalities using the exponential moment method
(either via the Golden-Thompson inequality or Lieb's trace inequality) are
roughly of the form $d \cdot e^{-t}$ when the matrices in the sum are $d
\times d$~\citep{AhlWin02,GLFBE10,Recht09,Gross09,Tropp11a,Tropp11b}.
Our results also improve over the tail inequalities of~\citet{RV07} in that
it applies to full-rank matrices, not just rank-one matrices; and also over
that of~\citet{MagZou11} in that it provides an exponential tail
inequality, rather than just a polynomial tail.
Thus, our improvements widen the applicability of these inequalities (and
the matrix exponential moment method in general); we explore some of these
in Subsection~\ref{section:examples}.

One disadvantage of our technique is that in finite dimensional settings,
the relevant trace quantity that replaces the dimension may turn out to be
of the same order as the dimension $d$ (an example of such a case is
discussed next).
In such cases, the resulting tail bound from
Theorem~\ref{theorem:bernstein} (say) of $\bar{k} \cdot t(e^t-t-1)^{-1}$ is
looser than the $d \cdot e^{-t}$ tail bound provided by earlier
techniques~\citep[\emph{e.g.},][]{Tropp11a}.

We note that the matrix exponential moment method used here and in previous
work can lead to a significantly suboptimal tail inequality in some cases.
This was pointed out by~\citet[Section 4.6]{Tropp11a}, but we elaborate on
it here further.
Suppose $x_1,\dotsc,x_n \in \{\pm1\}^d$ are i.i.d.~random vectors with
independent Rademacher entries---each coordinate of $x_i$ is $+1$ or $-1$
with equal probability.
Let $X_i = x_ix_i^\top - I$, so $\E[X_i] = 0$, $\lambda_{\max}(X_i) =
\lambda_{\max}(\E[X_i^2]) = d - 1$, and $\tr(\E[X_i^2]) = d(d-1)$.
In this case, Theorem~\ref{theorem:bernstein} implies the bound
\[ \Pr\left[ \lambda_{\max}\left( \frac1n \sum_{i=1}^n x_ix_i^\top - I
\right) > \sqrt{\frac{2(d-1)t}{n}} + \frac{(d-1)t}{3n} \right]
\leq dt(e^t-t-1)^{-1}
.
\]
On the other hand, because the $x_i$ have subgaussian projections, it is
known that
\[
\Pr\left[ \lambda_{\max}\left( \frac1n \sum_{i=1}^n x_ix_i^\top - I \right)
> 2\sqrt{\frac{71d+16t}{n}} + \frac{10d + 2t}{n} \right] \leq 2e^{-t/2}
\]
\citep[][also see Lemma~\ref{lemma:covariance-subgaussian} in
Appendix~\ref{appendix:outer}]{LPRTJ05}.
First, this latter inequality removes the $d$ factor on the right-hand
side.
Perhaps more importantly, the deviation term $t$ does not scale with $d$ in
this inequality, whereas it does in the former.
Thus this latter bound provides a much stronger exponential tail: roughly
put, $\Pr[\lambda_{\max}(\sum_{i=1}^n x_ix_i^\top /n - I) > c \cdot
(\sqrt{d/n} + d/n) + \tau] \leq \exp(-\Omega(n\min(\tau,\tau^2)))$ for some
constant $c > 0$; the probability bound from
Theorem~\ref{theorem:bernstein} is only of the form
$\exp(-\Omega((n/d)\min(\tau,\tau^2)))$.
The sub-optimality of Theorem~\ref{theorem:bernstein} is shared by all other
existing tail inequalities proved using this exponential moment method.
The issue is related to the asymptotic freeness of the random matrices
$X_1,\dotsc,X_n$~\citep{Voi91,Gui04}---\emph{i.e.}, that nearly all
high-order moments of random matrices vanish asymptotically---which is not
exploited in the matrix exponential moment method.
This means that the proof technique in the exponential moment method
over-counts the contribution of high-order matrix moments that should have
vanished.  
Formalizing this discrepancy would help clarify the limits of this
technique, but the task is beyond the scope of this paper. 
It is also worth mentioning that asymptotic freeness only holds when the
$X_i$ have independent entries.
For matrices with correlated entries, our bound is close to best possible
in the worst case.

\subsection{Examples} \label{section:examples}

For a matrix $M$, let $\|M\|_F$ denote its Frobenius norm, and let
$\|M\|_2$ denote its spectral norm.
If $M$ is symmetric, then $\|M\|_2 = \max\{\lambda_{\max}(M), \
-\lambda_{\min}(M)\}$, where $\lambda_{\max}(M)$ and $\lambda_{\min}(M)$
are, respectively, the largest and smallest eigenvalues of $M$.

\subsubsection{Supremum of a random process}

The first example embeds a random process in a diagonal matrix to show that
Theorem~\ref{theorem:subgaussian} is tight in certain cases.
\begin{example} \label{example:sup}
Let $(Z_1,Z_2,\dotsc)$ be (possibly dependent) mean-zero subgaussian random
variables; \emph{i.e.}, each $\E[Z_i] = 0$, and there exists positive
constants $\sigma_1,\sigma_2,\dotsc$ such that
\[ \E[\exp(\eta Z_i)] \leq \exp\left(\frac{\eta^2\sigma_i^2}{2}\right)
\quad \forall \eta \in \R
. \]
We further assume that $v := \sup_i \{ \sigma_i^2 \} < \infty$ and $k :=
\frac1v \sum_i \sigma_i^2 < \infty$.
Also, for convenience, we assume $\log k \geq 1.3$ (to simplify the tail
inequality).

Let $X = \diag(Z_1,Z_2,\dotsc)$ be the random diagonal matrix with the
$Z_i$ on its diagonal.
We have $\E[X] = 0$, and
\[
\log \E[\exp(\eta X)] \preceq \diag\left(\frac{\eta^2\sigma_1^2}{2},
\frac{\eta^2\sigma_2^2}{2}, \dotsc \right)
,
\]
so
\[
\lambda_{\max}\left(\log\E[\exp(\eta X)]\right)
\leq \frac{\eta^2 v}{2}
\quad \text{and} \quad
\tr\left(\log\E[\exp(\eta X)]\right)
\leq \frac{\eta^2 vk}{2}
.
\]
By Theorem~\ref{theorem:subgaussian}, we have
\[
\Pr\left[ \lambda_{\max}(X) > \sqrt{2vt} \right] \leq kt(e^t-t-1)^{-1}
.
\]
Therefore, letting $t := 2(\tau + \log k) > 2.6$ for $\tau > 0$ and
interpreting $\lambda_{\max}(X)$ as $\sup_i \{ Z_i \}$,
\[
\Pr\left[ \sup_i\{ Z_i \} > 2\sqrt{\sup_i \{ \sigma_i^2 \} \left(\log
\frac{\sum_i \sigma_i^2}{\sup_i \{ \sigma_i^2 \}} + \tau \right) } \right]
\leq e^{-\tau}
.
\]

Suppose the $Z_i \sim \mathcal{N}(0,1)$ are just $N$ i.i.d.~standard
Gaussian random variables.
Then the above inequality states that the largest of the $Z_i$ is $O(\log N
+ \tau)$ with probability at least $1-e^{-\tau}$; this is known to be tight
up to constants, so the $\log N$ term cannot generally be removed.
This fact has been noted by previous works on matrix tail
inequalities~\citep[\emph{e.g.},][]{Tropp11a}, which also use this example
as an extreme case.
We note, however, that these previous works are not applicable to the case
of a countably infinite number of mean-zero Gaussian random variables $Z_i
\sim \mathcal{N}(0,\sigma_i^2)$ (or more generally, subgaussian random
variables), whereas the above inequality can be applied as long as the sum
of the $\sigma_i^2$ is finite.
\end{example}

\subsubsection{Principal component analysis}

Our next two examples uses Theorem~\ref{theorem:bernstein} to give spectral norm
error bounds for estimating the second moment matrix of a random vector
from i.i.d.~copies.
This is relevant in the context of (kernel) principal component
analysis of high (or infinite) dimensional
data~\citep[\emph{e.g.},][]{SSM99}.
\begin{example} \label{example:pca}
Let $x_1,\dotsc,x_n$ be i.i.d.~random vectors with $\Sig :=
\E[x_ix_i^\top]$, $K := \E[x_ix_i^\top x_ix_i^\top]$, and $\|x_i\|_2 \leq
\bar\ell$ almost surely for some $\bar\ell > 0$.
Let $X_i := x_ix_i^\top - \Sig$ and $\hat\Sig_n := n^{-1} \sum_{i=1}^n
x_ix_i^\top$.
We have $\lambda_{\max}(X_i) \leq \bar\ell^2 - \lambda_{\min}(\Sig)$.
Also, $\lambda_{\max}(n^{-1} \sum_{i=1}^n \E[X_i^2]) =
\lambda_{\max}(K-\Sig^2)$ and $\E[\tr(n^{-1} \sum_{i=1}^n \E[X_i^2])] =
\tr(K-\Sig^2)$.
By Theorem~\ref{theorem:bernstein},
\begin{equation*}
\Pr\left[ \lambda_{\max}\bigl(\hat\Sig_n - \Sig\bigr)
> \sqrt{\frac{2\lambda_{\max}(K-\Sig^2)t}{n}}
+ \frac{(\bar\ell^2 - \lambda_{\min}(\Sig))t}{3n} \right]
\leq \frac{\tr(K-\Sig^2)}{\lambda_{\max}(K-\Sig^2)} \cdot t(e^t-t-1)^{-1}
.
\end{equation*}
Since $\lambda_{\max}(-X_i) \leq \lambda_{\max}(\Sig)$, we also have
\begin{equation*}
\Pr\left[ \lambda_{\max}\bigl(\Sig - \hat\Sig_n\bigr)
> \sqrt{\frac{2\lambda_{\max}(K-\Sig^2)t}{n}}
+ \frac{\lambda_{\max}(\Sig)t}{3n} \right]
\leq \frac{\tr(K-\Sig^2)}{\lambda_{\max}(K-\Sig^2)} \cdot t(e^t-t-1)^{-1}
.
\end{equation*}
Therefore
\begin{equation*}
\Pr\left[ \bigl\|\hat\Sig_n - \Sig\bigr\|_2
> \sqrt{\frac{2\lambda_{\max}(K-\Sig^2)t}{n}}
+ \frac{\max\{ \bar\ell^2-\lambda_{\min}(\Sig),\ \lambda_{\max}(\Sig)\}
t}{3n} \right]
\leq \frac{\tr(K-\Sig^2)}{\lambda_{\max}(K-\Sig^2)} \cdot 2t(e^t-t-1)^{-1}
.
\end{equation*}
A similar result was given by~\citet[Lemma 1]{ZwaBla06} but for Frobenius
norm error rather than spectral norm error.
This is generally incomparable to our result, although spectral norm error
may be more appropriate in cases where the spectrum is slow to decay.
\end{example}

We now show that combining the bound from the previous example with sharper
dimension-dependent tail inequalities can sometimes lead to stronger
results.
\begin{example} \label{example:pca2}
Let $x_1,\dotsc,x_n$ be i.i.d.~random vectors with $\Sig :=
\E[x_ix_i^\top]$;
let $X_i := x_ix_i^\top - \Sig$ and $\hat\Sig_n := n^{-1} \sum_{i=1}^n
x_ix_i^\top$.
For any positive integer $d \leq \rank(\Sig)$, let $\Pi_{d,0}$ be the
orthogonal projector to the $d$-dimensional eigenspace of $\Sig$
corresponding to its $d$ largest eigenvalues, and let $\Pi_{d,1} := I -
\Pi_{d,0}$.
We have
\begin{align*}
\bigl\|\hat\Sig_n - \Sig\bigr\|_2
& \leq
\bigl\|\Pi_{d,0}(\hat\Sig_n - \Sig\bigr)\Pi_{d,0}\|_2
+ 2\bigl\|\Pi_{d,0}(\hat\Sig_n - \Sig\bigr)\Pi_{d,1}\|_2
+ \bigl\|\Pi_{d,1}(\hat\Sig_n - \Sig\bigr)\Pi_{d,1}\|_2
\\
& \leq
2\bigl\|\Pi_{d,0}(\hat\Sig_n - \Sig\bigr)\Pi_{d,0}\|_2
+ 2\bigl\|\Pi_{d,1}(\hat\Sig_n - \Sig\bigr)\Pi_{d,1}\|_2
.
\end{align*}
We can use the tail inequalities from this work to control
$\|\Pi_{d,1}(\hat\Sig_n - \Sig)\Pi_{d,1}\|_2$, and use potentially sharper
dimension-dependent inequalities to control $\|\Pi_{d,0}(\hat\Sig_n -
\Sig)\Pi_{d,0}\|_2$.

Let $\Sig_{d,0} := \Pi_{d,0} \Sig \Pi_{d,0}$, $\Sig_{d,1} := \Pi_{d,1} \Sig
\Pi_{d,1}$, $K_{d,1} := \E[(\Pi_{d,1} x_i x_i^\top \Pi_{d,1})^2]$, and
assume $\|\Pi_{d,1} x_i\|_2 \leq \bar\ell_{d,1}$ for all $i=1,\dotsc,n$
almost surely.
Furthermore, suppose there exists $\gamma_{d,0} > 0$ such that for all $i =
1,\dotsc,n$ and all vectors $\alpha$,
\begin{equation*}
\E\Bigl[ \exp\Bigl(\alpha^\top \Sig_{d,0}^{-1/2} x_i\Bigr) \Bigr] \leq
\exp\bigl(\gamma_{d,0} \|\alpha\|_2^2/2\bigr)
\end{equation*}
where $\Sig_{d,0}^{-1/2}$ is the matrix square-root of the Moore-Penrose
pseudoinverse of $\Sig_{d,0}$.
This condition states that every projection of $\Sig_{d,0}^{-1/2} x_i$ has
subgaussian tails.
In this case, the tail behavior of
$\|\Pi_{d,0}(\hat\Sig_n-\Sig)\Pi_{d,0}\|_2$ should not depend on the
dimensionality $d$.
Indeed, a covering number argument gives
\begin{equation*}
\Pr\left[ \bigl\|\Pi_{d,0}\bigl(\hat\Sig_n - \Sig\bigr)\Pi_{d,0}\bigr\|_2
> 2\gamma_{d,0} \|\Sig\|_2 \biggl( \sqrt{\frac{71 d + 16t}{n}} + \frac{5d +
t}{n} \biggr) \right] \leq 2e^{-t/2}
\end{equation*}
for any $t > 0$ (see Lemma~\ref{lemma:covariance-subgaussian} in
Appendix~\ref{appendix:outer}).
Combining this with the tail inequality from Example~\ref{example:pca}, we
have (for $t \geq 2.6$)
\begin{multline} \label{eq:pca}
\Pr\Biggl[
\bigl\|\hat\Sig_n - \Sig\bigr\|_2
>
4\gamma_{d,0} \|\Sig\|_2 \biggl( \sqrt{\frac{71 d + 16t}{n}} +
\frac{5d + t}{n} \biggr)
\\
{}+
2\sqrt{\frac{2\lambda_{\max}(K_{d,1}-\Sig_{d,1}^2)
\bigl(\log\bigl(\frac{\tr(K_{d,1} - \Sig_{d,1}^2)}{\lambda_{\max}(K_{d,1} -
\Sig_{d,1}^2)}\bigr) + t\bigr)}{n}} \\
{}+
\frac{2\max\{ \bar\ell_{d,1}^2-\lambda_{\min}(\Sig_{d,1}),\
\lambda_{\max}(\Sig_{d,1})\} \bigl(\log\bigl(\frac{\tr(K_{d,1} -
\Sig_{d,1}^2)}{\lambda_{\max}(K_{d,1} - \Sig_{d,1}^2)}\bigr) + t \bigr)}{3n}
\Biggr]
\leq 4e^{-t/2}
.
\end{multline}
\end{example}

\noindent {\bf Comparisons.}
We consider the following stylized scenario to compare the bounds from
Example~\ref{example:pca} and Example~\ref{example:pca2}.
\begin{enumerate}
\item The largest $d$ eigenvalues of $\Sig$ are all equal to $\|\Sig\|_2$,
and the remaining eigenvalues are smaller and rapidly decaying so
$\tr(\Sig_{d,1}) / \|\Sig\|_2$ is small.

\item $\bar\ell^2$ and $\bar\ell_{d,1}^2$ are within constant
factors of $\tr(\Sig)$ and $\tr(\Sig_{d,1})$, respectively; this simply
requires that the squared length of any $x_i$ never be more than a constant
factor times its expected squared length.

\item $\lambda_{\max}(K-\Sig^2)$ and $\lambda_{\max}(K_{d,1} -
\Sig_{d,1}^2)$ are within constant factors of $\lambda_{\max}(\Sig)^2$ and
$\lambda_{\max}(\Sig_{d,1})^2$, respectively; this is similar to the
previous condition.

\end{enumerate}
We will also ignore constant and logarithmic factors, as well as the
$\gamma_{d,0}$ factors.
The bound on $\|\hat\Sig_n\|_2$ from Example~\ref{example:pca2} then becomes
(roughly)
\begin{equation} \label{eq:pca-simple}
\|\Sig\|_2 \left( 1 + \sqrt{\frac{d}{n}} \right)
+ \|\Sig\|_2 \left( \sqrt{\frac{t}{n}} + \frac{t}{n}
+ \frac{(\tr(\Sig_{d,1})/\|\Sig\|_2) t}{n}
\right)
\end{equation}
whereas the bound from Example~\ref{example:pca} is
\begin{equation} \label{eq:pca-old-simple}
\|\Sig\|_2
+ \|\Sig\|_2 \left( \sqrt{\frac{t}{n}}
+ \frac{\Bigl(d + (\tr(\Sig_{d,1})/\|\Sig\|_2) \Bigr) t}{n}
\right)
.
\end{equation}
The main difference between these bounds is that the deviation term $t$
does not scale with $d$ in~\eqref{eq:pca-simple}, but it does
in~\eqref{eq:pca-old-simple}, so the exponential tail in the latter is much
weaker, as discussed in Subsection~\ref{section:discussion}.

We can also compare the bound from Example~\ref{example:pca2} to the case where
the $x_i$ are i.i.d.~Gaussian random vectors with mean zero and covariance
$\Sig$.
Arrange the $x_i$ as columns in a matrix $\hat{A}_n = [x_1|\dotsb|x_n]$, so
\[ \|\hat\Sig_n\|_2
= \frac1n \|\hat{A}_n\hat{A}_n^\top\|_2
= \frac1n \|\hat{A}_n\|_2^2
.
\]
Note that $\hat{A}_n$ has the same distribution as $\Sig^{1/2}Z$, where $Z$
is a matrix of independent standard Gaussian random variables.
The function $Z \mapsto \|\Sig^{1/2}Z\|_2 = \|\hat{A}_n\|_2$ is
$\|\Sig^{1/2}\|_2$-Lipschitz in $Z$, so by Gaussian
concentration~\citep{Pisier89},
\[
\Pr\left[ \|\hat{A}_n\|_2 > \E\bigl[\|\hat{A}_n\|_2\bigr] +
\sqrt{2\|\Sig\|_2t} \right] \leq e^{-t}
.
\]
The expectation can be bounded using a result of~\citet{Gordon85,Gordon88}:
\[
\E\bigl[\|\hat{A}_n\|_2\bigr]
= \E\bigl[\|\Sig^{1/2}Z\|_2\bigr]
\leq \|\Sig^{1/2}\|_2 \sqrt{n} + \|\Sig^{1/2}\|_F
.
\]
Putting these together, we obtain
\begin{equation*}
\Pr\Biggl[
\bigl\|\hat\Sig_n\bigr\|_2
>
\bigl\|\Sig\bigr\|_2
+ 2 \sqrt{\frac{\|\Sig\|_2\tr(\Sigma)}{n}} + 2\sqrt{\frac{2\|\Sig\|_2^2t}{n}}
+ \frac{\tr(\Sig) + 2\sqrt{2\tr(\Sig)\|\Sig\|_2t} + 2\|\Sig\|_2 t}{n}
\Biggr]
\leq e^{-t}
.
\end{equation*}
In our stylized scenario, this roughly implies a bound on
$\|\hat\Sig_n\|_2$ of the form
\begin{equation} \label{eq:pca-gaussian-simple}
\|\Sig\|_2 \left( 1 + \sqrt{\frac{d + \tr(\Sig_{d,1})/\|\Sig\|_2}{n}} +
\frac{d + \tr(\Sig_{d,1})/\|\Sig\|_2}{n} \right)
+ \|\Sig\|_2 \left( \sqrt{\frac{t}{n}} + \frac{t}{n} \right)
\end{equation}
Compared to~\eqref{eq:pca-simple}, we see that the main difference is
that $t$ does not scale with $\tr(\Sig_{d,1})/\|\Sig\|_2$
in~\eqref{eq:pca-gaussian-simple}, but it does in~\eqref{eq:pca-simple}.
Therefore the bounds are comparable (up to constant and logarithmic
factors) when the eigenspectrum of $\Sig$ is rapidly decaying after the
first $d$ eigenvalues.

\subsubsection{Approximate matrix multiplication}

Finally, we give an example about approximating a matrix product $AB^\top$
using non-uniform sampling of the columns of $A$ and $B$.
\begin{example} \label{example:sampling}
Let $A := [a_1|\dotsb|a_m]$ and $B := [b_1|\dotsb|b_m]$ be fixed matrices,
each with $m$ columns.
Assume $a_i \neq 0$ and $b_i \neq 0$ for all $i=1,\dotsc,m$.
If $m$ is very large, then the straightforward computation of the product
$AB^\top$ can be prohibitive.
An alternative is to take a small (non-uniform) random sample of the
columns of $A$ and $B$, say $a_{i_1}, b_{i_1}, \dotsc, a_{i_n}, b_{i_n}$,
and then compute a weighted sum of outer products
\begin{equation*}
\frac1n \sum_{j=1}^n \frac{a_{i_j}b_{i_j}^\top}{p_{i_j}}
\end{equation*}
where $p_{i_j} > 0$ is the \emph{a priori} probability of choosing the
column index $i_j \in \{1,\dotsc,m\}$ (the actual values of the
probabilities $p_i$ for $i=1,\dotsc,m$ are given below).
An analysis of this scheme was given by~\citet{MagZou11} with the stronger
requirement that the number of columns sampled be polynomially related to
the allowed failure probability.
Here we give an analysis in which the number of columns sampled depends
only logarithmically on the failure probability.

Let $X_1,\dotsc,X_n$ be i.i.d.~random matrices with the discrete
distribution given by
\begin{equation*}
\Pr\left[ X_j = \frac1{p_i} \begin{bmatrix} 0 & a_ib_i^\top \\
b_ia_i^\top & 0 \end{bmatrix} \right] = p_i
\propto \|a_i\|_2 \|b_i\|_2
\end{equation*}
for all $i = 1,\dotsc,m$, where $p_i := \|a_i\|_2 \|b_i\|_2 / Z$ and $Z :=
\sum_{i=1}^m \|a_i\|_2 \|b_i\|_2$.
Let
\begin{equation*}
\hat{M}_n := \frac1n \sum_{j=1}^n X_j
\quad \text{and} \quad
M := \begin{bmatrix} 0 & AB^\top \\ BA^\top & 0 \end{bmatrix}
.
\end{equation*}
Note that $\|\hat{M}_n - M\|_2$ is the spectral norm error of approximating
$AB^\top$ using the average of $n$ outer products $\sum_{j=1}^n a_{i_j}
b_{i_j}^\top / p_{i_j}$, where the indices are such that $i_j = i
\Leftrightarrow X_j = a_ib_i^\top / p_i$ for $j = 1,\dotsc,n$.

We have the following identities:
\begin{align*}
\E[X_j]
& = \sum_{i=1}^m p_i \left( \frac1{p_i} \begin{bmatrix} 0 & a_ib_i^\top
\\ b_ia_i^\top & 0 \end{bmatrix} \right)
= \begin{bmatrix} 0 & \sum_{i=1}^m a_ib_i^\top \\ \sum_{i=1}^m b_ia_i^\top
& 0 \end{bmatrix}
= M
\\
\tr(\E[X_j^2])
& = \tr\left( \sum_{i=1}^m p_i \left( \frac1{p_i^2} \begin{bmatrix}
a_ib_i^\top b_i a_i^\top & 0 \\ 0 & b_ia_i^\top a_ib_i^\top \end{bmatrix}
\right) \right)
= \sum_{i=1}^m \frac{2\|a_i\|_2^2\|b_i\|_2^2}{p_i}
= 2Z^2
\\
\tr(\E[X_j]^2)
& = \tr\left( \begin{bmatrix} AB^\top BA^\top & 0 \\ 0 & BA^\top AB^\top
\end{bmatrix} \right)
= 2\tr(A^\top A B^\top B)
;
\end{align*}
and the following inequalities:
\begin{align*}
\|X_j\|_2
& \leq \max_{i=1,\dotsc,m} \frac1{p_i}
\left\| \begin{bmatrix} 0 & a_ib_i^\top \\ b_ia_i^\top & 0 \end{bmatrix}
\right\|_2
= \max_{i=1,\dotsc,m} \frac{\|a_ib_i^\top\|_2}{p_i}
= Z
\\
\|\E[X_j]\|_2
& = \|AB^\top\|_2 \leq \|A\|_2 \|B\|_2
\\
\|\E[X_j^2]\|_2
& \leq \|A\|_2\|B\|_2 Z
.
\end{align*}
This means $\|X_j - M\|_2 \leq Z + \|A\|_2\|B\|_2$ and $\|\E[(X_j -
M)^2]\|_2 \leq \|\E[X_j^2] - M^2\|_2 \leq \|A\|_2\|B\|_2(Z +
\|A\|_2\|B\|_2)$, so Theorem~\ref{theorem:bernstein} and a union bound imply
\begin{multline*}
\Pr\left[
\bigl\|\hat{M}_n - M\bigr\|_2
> \sqrt{\frac{2\left(\|A\|_2\|B\|_2(Z + \|A\|_2\|B\|_2) \right)t}{n}} +
\frac{(Z + \|A\|_2\|B\|_2)t}{3n}
\right]
\\
\leq 4\left(\frac{Z^2 - \tr(A^\top A B^\top B)}{\|A\|_2\|B\|_2(Z +
\|A\|_2\|B\|_2)} \right) \cdot t(e^t-t-1)^{-1}
.
\end{multline*}
Let $r_A := \|A\|_F^2 / \|A\|_2^2 \in [1,\rank(A)]$ and $r_B := \|B\|_F^2 /
\|B\|_2^2 \in [1,\rank(B)]$ be the numerical (or stable) rank of $A$ and
$B$, respectively.
Since $Z / (\|A\|_2\|B\|_2) \leq \|A\|_F \|B\|_F / (\|A\|_2\|B\|_2) =
\sqrt{r_Ar_B}$, we have the simplified (but slightly looser) bound
\begin{equation*}
\Pr\left[
\frac{\bigl\|\hat{M}_n - M\bigr\|_2}{\|A\|_2\|B\|_2}
>
2\sqrt{\frac{(1 + \sqrt{r_Ar_B})(\log(4\sqrt{r_Ar_B}) + t)}{n}}
+
\frac{2(1 + \sqrt{r_Ar_B})(\log(4\sqrt{r_Ar_B}) + t)}{3n}
\right]
\leq e^{-t}
.
\end{equation*}
Therefore, for any $\epsilon \in (0,1)$ and $\delta \in (0,1)$, if
\begin{equation*}
n \geq \left( \frac83 + 2\sqrt{\frac53} \right)
\frac{(1 + \sqrt{r_Ar_B})(\log(4\sqrt{r_Ar_B}) +
\log(1/\delta))}{\epsilon^2}
,
\end{equation*}
then with probability at least $1-\delta$ over the random choice of column
indices $i_1,\dotsc,i_n$,
\begin{equation*}
\left\| \frac1n \sum_{j=1}^n \frac{a_{i_j}b_{i_j}^\top}{p_{i_j}} - AB^\top
\right\|_2 \leq \epsilon \|A\|_2 \|B\|_2
.
\end{equation*}
\end{example}

\subsubsection*{Acknowledgements}

We are grateful to Alex Gittens for useful comments and pointing out a
subtle mistake in our proof of Theorem~\ref{theorem:laplace-prob} in an
earlier draft, and to Joel Tropp for his many comments and suggestions.

\subsection*{References}
{\def\section*#1{}%
\bibliography{concentration}%
\bibliographystyle{plainnat}}

\appendix

\section{Sums of random vector outer products}
\label{appendix:outer}

The following lemma is a tail inequality for smallest and largest
eigenvalues of the empirical covariance matrix of subgaussian random
vectors.
This result (with non-explicit constants) was originally obtained
by~\citet{LPRTJ05}.
\begin{lemma}
\label{lemma:covariance-subgaussian}
Let $x_1,\dotsc,x_n$ be random vectors in $\R^d$ such that, for some
$\gamma \geq 0$,
\begin{gather*}
\E\left[ x_ix_i^\top \ \Big| \ x_1,\dotsc,x_{i-1} \right] = I
\quad \text{and} \\
\E\left[ \exp\left( \alpha^\top x_i \right) \ \Big| \ x_1,\dotsc,x_{i-1}
\right] \leq \exp\left( \|\alpha\|_2^2 \gamma / 2 \right)
\quad \text{for all $\alpha \in \R^d$}
\end{gather*}
for all $i=1,\dotsc,n$, almost surely.
For all $\epsilon_0 \in (0,1/2)$ and $\delta \in (0,1)$,
\[
\Pr\Biggl[\
\lambda_{\max}\left( \frac1n \sum_{i=1}^n x_ix_i^\top \right)
 > 1 + \frac1{1-2\epsilon_0} \cdot \epsilon_{\epsilon_0,\delta,n}
\quad \text{or} \quad
\lambda_{\min}\left( \frac1n \sum_{i=1}^n x_ix_i^\top \right)
< 1 - \frac1{1-2\epsilon_0} \cdot \epsilon_{\epsilon_0,\delta,n}
\ \Biggr] \leq \delta
\]
where
\[
\epsilon_{\epsilon_0,\delta,n} :=
\gamma \cdot
\left(
\sqrt{\frac{32\left( d\log(1+2/\epsilon_0) + \log(2/\delta)
\right)}{n}}
+ \frac{2\left( d\log(1+2/\epsilon_0) + \log(2/\delta)
\right)}{n}
\right)
.
\]
\end{lemma}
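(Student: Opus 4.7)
The plan is to reduce the spectral norm bound to uniform control of scalar quadratic forms over an $\epsilon_0$-net of the sphere, and then to apply a scalar martingale subexponential Bernstein inequality in each fixed direction.

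First, I would let $\hat\Sig_n := n^{-1}\sum_{i=1}^n x_ix_i^\top$ and fix an $\epsilon_0$-net $\mathcal{N} \subseteq S^{d-1}$ of cardinality at most $(1+2/\epsilon_0)^d$ (this covering bound is standard via a volumetric argument). The elementary approximation lemma then gives
\[
\bigl\|\hat\Sig_n - I\bigr\|_2 \;\leq\; \frac{1}{1-2\epsilon_0}\,\max_{u\in\mathcal{N}} \bigl| u^\top(\hat\Sig_n - I)u \bigr|,
\]
which exactly accounts for the $1/(1-2\epsilon_0)$ factor in the statement. This reduces the problem to bounding $|n^{-1}\sum_{i=1}^n Y_i|$, where $Y_i := (u^\top x_i)^2 - 1$, uniformly over $u \in \mathcal{N}$.

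Second, for each fixed unit vector $u$, the hypotheses give $\E_i[Y_i] = 0$ (from $\E_i[x_ix_i^\top] = I$) and a subexponential conditional MGF bound for $(u^\top x_i)^2$. The cleanest route to the latter is the identity $\exp(\lambda z^2) = \E_{g \sim \mathcal{N}(0,1)}[\exp(\sqrt{2\lambda}\,gz)]$, which combined with the subgaussian hypothesis and Fubini gives $\E_i[\exp(\lambda(u^\top x_i)^2)] \leq (1-2\lambda\gamma)^{-1/2}$ for $0 < \lambda < 1/(2\gamma)$. Iterating this MGF bound conditionally along $i = 1,\dotsc,n$ via the tower property and then applying the Chernoff trick with $\lambda$ optimized over the two regimes (small vs.\ large deviation) produces a scalar subexponential tail of the form
\[
\Pr\Bigl[ \bigl| n^{-1}\textstyle\sum_{i=1}^n Y_i \bigr| > \gamma\bigl(\sqrt{32s/n} + 2s/n\bigr) \Bigr] \leq 2e^{-s} \qquad (s > 0).
\]
Third, a union bound over $u \in \mathcal{N}$ with $s := d\log(1+2/\epsilon_0) + \log(2/\delta)$ yields the defined $\epsilon_{\epsilon_0,\delta,n}$ with total failure probability at most $\delta$; combining this with the covering inequality of step one controls both tails simultaneously, giving both the $\lambda_{\max}$ upper bound and the $\lambda_{\min}$ lower bound in the statement.

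The main obstacle will be the second step: producing the scalar subexponential tail in the martingale setting with the exact constants $32$ and $2$. The iterated MGF computation itself is routine via the tower property, but matching the constants requires a careful Chernoff optimization, where the small-$\lambda$ regime yields the subgaussian piece $\sqrt{32s/n}$ and a boundary choice such as $\lambda = 1/(4\gamma)$ yields the linear piece $2s/n$. The covering construction in step one and the union bound in step three are entirely standard.
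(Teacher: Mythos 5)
Your steps 1 and 3 --- the $\epsilon_0$-net reduction of $\|\frac1n\sum_i x_ix_i^\top - I\|_2$ to a maximum of quadratic forms with the $1/(1-2\epsilon_0)$ factor, and the union bound with $s = d\log(1+2/\epsilon_0)+\log(2/\delta)$ --- are exactly the paper's argument (its Lemmas~\ref{lemma:pisier} and~\ref{lemma:rayleigh}). The gap is in step 2, and it is not merely a matter of the constants $32$ and $2$: it is the centering. Writing $\E_i[\,\cdot\,]$ for $\E[\,\cdot\mid x_1,\dotsc,x_{i-1}]$ and $W_i := (u^\top x_i)^2$, your Gaussian-identity computation gives, for $0<\lambda<1/(2\gamma)$,
\[
\E_i\bigl[\exp(\lambda W_i)\bigr]\;\leq\;(1-2\lambda\gamma)^{-1/2}
\;=\;\exp\bigl(\lambda\gamma+O(\lambda^2\gamma^2)\bigr),
\]
so the linear-in-$\lambda$ term of the bound is $\lambda\gamma$ rather than $\lambda\,\E_i[W_i]=\lambda$. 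The hypotheses force $\gamma\geq 1$ (compare the $s^2$ coefficients of $\E_i[e^{s\alpha^\top x_i}]$ and $e^{s^2\gamma/2}$, using $\E_i[(\alpha^\top x_i)^2]=1$), and $\gamma$ may be much larger than $1$. Consequently, after iterating and applying Chernoff, the optimal $\lambda$ for the upper tail is positive only when the deviation of $\sum_i W_i$ from $n$ exceeds $n(\gamma-1)$; your MGF bound is consistent with variables of conditional mean $\gamma$, so it can only yield $\Pr[\frac1n\sum_i W_i > \gamma + \cdots]$, not $\Pr[\frac1n\sum_i W_i > 1 + \cdots]$. Since $\epsilon_{\epsilon_0,\delta,n}\to 0$ as $n\to\infty$ while $\gamma-1$ is a fixed constant, the claimed scalar tail in your step 2 does not follow from the stated ingredients. (The lower tail would also need a separate argument, since the identity $e^{\lambda z^2}=\E_g[e^{\sqrt{2\lambda}gz}]$ requires $\lambda>0$.)

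The missing idea --- and the paper's actual route --- is to keep the exact conditional mean $\E_i[W_i]=1$ outside of the MGF estimate and use the subgaussian hypothesis only to control the fluctuation part. The paper first derives the conditional tail bound $\Pr[W_i>t\mid x_1,\dotsc,x_{i-1}]\leq 2e^{-t/(2\gamma)}$ by applying Chernoff to $\pm u^\top x_i$, and then invokes the integration-by-parts identity
\[
\E\bigl[e^{\eta W}\bigr]-\eta\,\E[W]-1=\eta\int_0^\infty\bigl(e^{\eta t}-1\bigr)\Pr[W>t]\,dt ,
\]
which isolates the true mean and yields $\E_i[e^{\eta W_i}]\leq\exp\bigl(\eta+8\eta^2\gamma^2/(1-2\eta\gamma)\bigr)$ --- correctly centered at $1$, with the quadratic term producing $\sqrt{32\gamma^2 s/n}$ and the $1/(1-2\eta\gamma)$ pole producing $2\gamma s/n$ upon optimizing $\eta$ (and $\eta\leq 0$ for the lower tail). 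If you wish to salvage the Gaussian-identity route, you would have to combine the moment bounds it implies with the exact first moment via a Bernstein-type expansion of the centered MGF, which amounts to the same bookkeeping.
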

\begin{remark}
In our applications of this lemma, we will simply choose $\epsilon_0 :=
1/4$ for concreteness.
\end{remark}

We give the proof of Lemma~\ref{lemma:covariance-subgaussian} for
completeness.

The subgaussian property most readily lends itself to bounds on linear
combinations of subgaussian random variables.
However, we are interested in bounding certain quadratic combinations.
Therefore we bootstrap from the bound for linear combinations to bound the
moment generating function of the quadratic combinations; from there, we
can obtain the desired tail inequality.

The following lemma relates the moment generating function to a tail
inequality.
\begin{lemma} \label{lemma:mgf-bound}
Let $W$ be a non-negative random variable.
For any $\eta \in \R$,
\[ \E\left[ \exp\left( \eta W \right) \right]
- \eta \E\left[ W \right]
- 1
= \eta \int_0^\infty \left( \exp\left( \eta t \right) - 1 \right)
\cdot \Pr\left[ W > t \right] \cdot dt
.
\]
\end{lemma}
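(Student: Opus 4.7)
The plan is to recognize this as a standard tail-integration identity and derive it by Fubini/Tonelli. The key observation is that the integrand on the right-hand side is non-negative regardless of the sign of $\eta$: for $\eta > 0$ we have $e^{\eta t}-1 \geq 0$ and the leading $\eta$ is positive, while for $\eta < 0$ we have $e^{\eta t}-1 \leq 0$ on $t \geq 0$, so the product $\eta(e^{\eta t}-1)$ is again non-negative; combined with $\Pr[W>t] \geq 0$, this makes the whole integrand non-negative and legitimizes exchanging the order of integration.

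First I would rewrite $\Pr[W>t] = \E[\I[W>t]]$ and push the expectation outside, using Tonelli's theorem:
\[
\eta \int_0^\infty (e^{\eta t}-1)\Pr[W>t]\,dt
= \E\left[\, \eta \int_0^W (e^{\eta t}-1)\,dt \,\right].
\]
Here the upper limit collapses from $\infty$ down to $W$ because $\I[W>t] = \I[t < W]$ and $W \geq 0$ almost surely.

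Next I would evaluate the inner integral explicitly using the antiderivative $e^{\eta t}/\eta - t$ (valid for $\eta \neq 0$):
\[
\eta \int_0^W (e^{\eta t}-1)\,dt
= \eta\left[\frac{e^{\eta t}}{\eta} - t\right]_0^W
= e^{\eta W} - \eta W - 1.
\]
Taking expectation of both sides gives $\E[e^{\eta W}] - \eta\E[W] - 1$, matching the left-hand side of the claim. The degenerate case $\eta = 0$ reduces both sides to $0$ and needs no separate argument beyond continuity (or a one-line check).

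There isn't really a serious obstacle here; the only point requiring care is the sign analysis to justify Tonelli in the $\eta < 0$ case, and the implicit assumption that $\E[\exp(\eta W)]$ and $\E[W]$ are finite (otherwise both sides are $+\infty$ in a consistent way, and the identity still holds in $[0,\infty]$). Assuming the MGF is finite at $\eta$, the dominated/monotone convergence needed to conclude is automatic.
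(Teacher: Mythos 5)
Your proof is correct. The paper disposes of this lemma with the single word ``integration-by-parts,'' i.e., it treats $\Pr[W>t]$ as $1-F(t)$ and integrates $\eta(e^{\eta t}-1)(1-F(t))$ against $dt$ by parts, with the boundary term vanishing because $e^{\eta t}-1 = 0$ at $t=0$ and $(e^{\eta t}-\eta t-1)\Pr[W>t]\to 0$ under the implicit integrability assumptions. You instead write $\Pr[W>t]=\E[\I[t<W]]$ and apply Tonelli, reducing the claim to the pointwise identity $\eta\int_0^W(e^{\eta t}-1)\,dt = e^{\eta W}-\eta W-1$. The two routes are the standard dual ways of proving a layer-cake identity; yours has the advantage of making the justification fully explicit --- in particular your observation that $\eta(e^{\eta t}-1)\geq 0$ for every sign of $\eta$ (so that the interchange needs only Tonelli, and the identity holds in $[0,\infty]$ with no finiteness hypothesis) is exactly the point that the paper's one-word proof leaves to the reader, and it matters since the lemma is later applied with $\eta<0$ as well as $\eta>0$. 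The only caveat worth noting is cosmetic: the integration-by-parts route requires a small argument that the boundary term at infinity vanishes, whereas your Tonelli route sidesteps boundary terms entirely; both are fine here.
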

\begin{proof}
Integration-by-parts.
\end{proof}

The next lemma gives a tail inequality for any particular Rayleigh quotient
of the empirical covariance matrix.
\begin{lemma} \label{lemma:rayleigh}
Let $x_1,\dotsc,x_n$ be random vectors in $\R^d$ such that, for some
$\gamma \geq 0$,
\begin{gather*}
\E\left[ x_ix_i^\top \ \Big| \ x_1,\dotsc,x_{i-1} \right] = I
\quad \text{and} \\
\E\left[ \exp\left( \alpha^\top x_i \right) \ \Big| \ x_1,\dotsc,x_{i-1}
\right] \leq \exp\left( \|\alpha\|_2^2 \gamma / 2 \right)
\quad \text{for all $\alpha \in \R^d$}
\end{gather*}
for all $i=1,\dotsc,n$, almost surely.
For all $\alpha \in \R^d$ such that $\|\alpha\|_2 = 1$, and all $\delta \in
(0,1)$,
\[
\Pr\left[ \alpha^\top \left( \frac1n \sum_{i=1}^n x_ix_i^\top \right)
\alpha > 1 + \sqrt{\frac{32\gamma^2 \log(1/\delta)}{n}} + \frac{2\gamma
\log(1/\delta)}{n}
\right] \leq \delta
\]
and
\[
\Pr\left[ \alpha^\top \left( \frac1n \sum_{i=1}^n x_ix_i^\top \right)
\alpha < 1 - \sqrt{\frac{32\gamma^2 \log(1/\delta)}{n}}
\right] \leq \delta
. \]
\end{lemma}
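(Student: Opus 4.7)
\textbf{Proof plan for Lemma~\ref{lemma:rayleigh}.}
The plan is to reduce the Rayleigh quotient bound to a scalar Bernstein-type deviation for the sum of squared projections $Y_i^2$, where $Y_i := \alpha^\top x_i$. Since $\|\alpha\|_2 = 1$, the hypotheses immediately yield $\E[Y_i^2 \mid x_1,\dotsc,x_{i-1}] = 1$ and $\E[\exp(\eta Y_i) \mid x_1,\dotsc,x_{i-1}] \leq \exp(\eta^2 \gamma/2)$ for all $\eta \in \R$, and a one-sided Chernoff step then produces the subgaussian tail $\Pr[|Y_i| > s \mid x_1,\dotsc,x_{i-1}] \leq 2\exp(-s^2/(2\gamma))$, and hence $\Pr[Y_i^2 > t \mid x_1,\dotsc,x_{i-1}] \leq 2\exp(-t/(2\gamma))$ for all $t \geq 0$.

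For the upper tail I would then control the conditional MGF of $Y_i^2 - 1$ by applying Lemma~\ref{lemma:mgf-bound} with $W := Y_i^2$: inserting the exponential tail above and evaluating the resulting integral (which converges for $\eta \in [0, 1/(2\gamma))$) gives a bound of the form
\[
\E\bigl[\exp(\eta(Y_i^2 - 1)) \mid x_1,\dotsc,x_{i-1}\bigr]
\leq \exp\!\left(\frac{8\gamma^2\eta^2}{1 - 2\gamma\eta}\right).
\]
Iterating over $i = n, n-1, \dotsc, 1$ via the tower property multiplies the exponent by $n$, and then a Chernoff step on $\exp(\eta \sum_i (Y_i^2 - 1))$ followed by the usual Bernstein optimization in $\eta$ (using the generic fact that an MGF bound of $\exp(\sigma^2\eta^2/(2(1-c\eta)))$ inverts to a deviation of $\sqrt{2\sigma^2\log(1/\delta)} + c\log(1/\delta)$, with $\sigma^2 = 16n\gamma^2$ and $c = 2\gamma$) produces the stated upper tail $\sqrt{32\gamma^2\log(1/\delta)/n} + 2\gamma\log(1/\delta)/n$.

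The lower tail follows the same template but uses the elementary inequality $e^{-x} \leq 1 - x + x^2/2$ for $x \geq 0$, which combined with the fourth-moment estimate $\E[Y_i^4 \mid x_1,\dotsc,x_{i-1}] \leq C\gamma^2$ (a routine consequence of subgaussianity, again obtainable via Lemma~\ref{lemma:mgf-bound}) yields the genuinely subgaussian bound $\E[\exp(-\eta(Y_i^2 - 1)) \mid x_1,\dotsc,x_{i-1}] \leq \exp(8\gamma^2\eta^2)$ for all $\eta \geq 0$, with no $(1-2\gamma\eta)^{-1}$ factor. This is precisely why the stated lower-tail bound contains no linear-in-$\log(1/\delta)/n$ term; one more Chernoff-and-optimize step over $\eta \geq 0$ concludes.

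The main technical care lies in tracking the exact constants $32$ and $2$ from the tail-to-MGF passage through Lemma~\ref{lemma:mgf-bound} into the Bernstein optimization, and in verifying that the restriction $\eta < 1/(2\gamma)$ is compatible with the optimum. The martingale extension itself is routine thanks to the conditional form of the hypotheses.
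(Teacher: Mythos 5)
Your plan is correct and follows essentially the same route as the paper's proof: set $W_i := (\alpha^\top x_i)^2$, derive the conditional tail $\Pr[W_i > t \mid x_1,\dotsc,x_{i-1}] \leq 2e^{-t/(2\gamma)}$ by a Chernoff step on $\pm\alpha^\top x_i$, convert it to the conditional MGF bound $\exp\bigl(\eta + 8\eta^2\gamma^2/(1-2\eta\gamma)\bigr)$ via Lemma~\ref{lemma:mgf-bound}, iterate by the tower property, and invert by the standard Bernstein optimization; the constants $32$ and $2$ come out exactly as you indicate. The only place you diverge is the lower tail: the paper simply reuses the \emph{same} MGF bound with $\eta \leq 0$, where $1 - 2\eta\gamma \geq 1$ kills the denominator and leaves the purely quadratic exponent $8n\eta^2\gamma^2$, whereas you rederive a subgaussian bound from scratch via $e^{-x} \leq 1 - x + x^2/2$ and the fourth-moment estimate $\E[Y_i^4 \mid \cdots] \leq 16\gamma^2$. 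Your route also works and yields the same constant ($\E[Y_i^4] \leq \int_0^\infty 4t\cdot 2e^{-t/(2\gamma)}\,dt/2\cdot 2 = 16\gamma^2$ gives exactly $\exp(8\gamma^2\eta^2)$), but it is an extra computation the paper avoids; the paper's observation that the Bernstein-type MGF bound is already valid and denominator-free for nonpositive $\eta$ is the slicker way to see why the lower tail has no linear term.
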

\begin{proof}
Fix $\alpha \in \R^d$ with $\|\alpha\|_2 = 1$.
For $i = 1,\dotsc,n$, let $W_i := (\alpha^\top x_i)^2$, so $\E[W_i] = 1$.
For any $t \geq 0$, using Chernoff's bounding method gives
\begin{align*}
\lefteqn{
\E\left[ \I\left[ W_i > t \right] \ | \ x_1,\dotsc,x_{i-1} \right]
}
\\
& \leq \inf_{\eta>0} \left\{ \E\left[ \I\left[ \exp\left( \eta
|\alpha^\top x_i| \right) > e^{\eta\sqrt{t}} \right] \ \Big| \
x_1,\dotsc,x_{i-1}\right] \right\} \\
& \leq \inf_{\eta>0} \left\{
e^{-\eta\sqrt{t}}
\cdot \left(
\E\left[ \exp\left( \eta \alpha^\top x_i \right) \ \Big| \
x_1,\dotsc,x_{i-1} \right]
+ \E\left[ \exp\left( -\eta \alpha^\top x_i \right) \ \Big| \
x_1,\dotsc,x_{i-1} \right]
\right)
\right\} \\
& \leq \inf_{\eta>0} \left\{
2 \exp\left( - \eta \sqrt{t} + \eta^2 \gamma / 2 \right)
\right\} \\
& = 2 \exp\left( - \frac{t}{2\gamma} \right)
.
\end{align*}
So by Lemma~\ref{lemma:mgf-bound}, for any $\eta < 1/(2\gamma)$,
\begin{align*}
\E\left[ \exp\left( \eta W_i \right) \ | \ x_1,\dotsc,x_{i-1}\right]
& \leq 1 + \eta
+ \eta \int_0^\infty \left( \exp\left( \eta t \right) - 1 \right)
\cdot 2 \exp\left( - \frac{t}{2\gamma} \right) \cdot dt \\
& = 1 + \eta + \frac{8\eta^2\gamma^2}{1 - 2\eta\gamma} \\
& \leq \exp\left( \eta + \frac{8\eta^2\gamma^2}{1 -
2\eta\gamma} \right)
\end{align*}
and therefore
\[ \E\left[ \exp\left( \eta \sum_{i=1}^n W_i \right) \right]
\leq \exp\left( n\eta +
\frac{8n\eta^2\gamma^2}{1-2\eta\gamma} \right)
. \]
Using Chernoff's bounding method twice more gives
\begin{align*}
\Pr\left[ \sum_{i=1}^n W_i > n + t \right]
& \leq \inf_{0\leq\eta<1/(2\gamma)} \left\{ \exp\left( -t\eta +
\frac{8n\eta^2\gamma^2}{1-2\eta\gamma} \right) \right\} \\
& = \exp\left( -\frac{8n\gamma^2 + \gamma t - \sqrt{8n\gamma^2
\left( 8n\gamma^2 + 2\gamma t \right)}}{2\gamma^2} \right)
\end{align*}
and
\[
\Pr\left[ \sum_{i=1}^n W_i < n - t \right]
\leq \inf_{\eta\leq0} \left\{ \exp\left( t\eta +
\frac{8n\eta^2\gamma^2}{1-2\eta\gamma} \right) \right\} \\
\leq \exp\left( -\frac{t^2}{32n\gamma^2} \right)
.
\]
The claim follows.
\end{proof}

In order to bound the smallest and largest eigenvalues of the empirical
covariance matrix, we apply the bound for the Rayleigh quotient in
Lemma~\ref{lemma:rayleigh} together with a covering argument.
\begin{lemma}[\citealp{Pisier89}]
\label{lemma:pisier}
For any $\epsilon_0 > 0$, there exists $Q \subseteq \mathcal{S}^{d-1} := \{ \alpha \in
\R^d : \|\alpha\|_2 = 1 \}$ of cardinality $\leq (1+2/\epsilon_0)^d$ such that
$\forall \alpha \in \mathcal{S}^{d-1} \exists q \in Q \centerdot \|\alpha-q\|_2 \leq
\epsilon_0$.
\end{lemma}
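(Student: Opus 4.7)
The plan is a standard volumetric packing argument based on a maximal $\epsilon_0$-separated subset of the sphere. First I would let $Q \subseteq \mathcal{S}^{d-1}$ be a maximal subset with the property that $\|q - q'\|_2 > \epsilon_0$ for every pair of distinct points $q, q' \in Q$. Such a maximal set exists by Zorn's lemma (or by a greedy construction), and since $\mathcal{S}^{d-1}$ is compact, any $\epsilon_0$-separated set is finite, so $Q$ is finite.

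Next I would argue that $Q$ is automatically an $\epsilon_0$-net. Suppose, toward a contradiction, that some $\alpha \in \mathcal{S}^{d-1}$ satisfied $\|\alpha - q\|_2 > \epsilon_0$ for every $q \in Q$. Then $Q \cup \{\alpha\}$ would still be $\epsilon_0$-separated, contradicting the maximality of $Q$. Hence every $\alpha \in \mathcal{S}^{d-1}$ lies within Euclidean distance $\epsilon_0$ of some $q \in Q$, which is the covering property we need.

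To bound the cardinality of $Q$, I would use a packing argument in $\R^d$. Because the points of $Q$ are pairwise more than $\epsilon_0$ apart, the open Euclidean balls $B(q, \epsilon_0/2)$ for $q \in Q$ are pairwise disjoint. Moreover, each such ball is contained in $B(0, 1 + \epsilon_0/2)$, since the center $q$ lies on the unit sphere. Comparing Lebesgue volumes in $\R^d$, which scale as the $d$-th power of the radius, yields
\[
|Q| \cdot \left(\frac{\epsilon_0}{2}\right)^d \;\leq\; \left(1 + \frac{\epsilon_0}{2}\right)^d,
\]
and rearranging gives $|Q| \leq (1 + 2/\epsilon_0)^d$.

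There is no real obstacle here; the only subtle point is that the same set $Q$ plays two roles simultaneously: maximal separation gives both the packing (needed for the volume comparison) and the covering (needed for the $\epsilon_0$-net property, via the maximality contradiction). One could alternatively build $Q$ greedily over a countable dense subset of $\mathcal{S}^{d-1}$ and then pass to a limit, but the Zorn/compactness route is cleaner and is the standard textbook presentation.
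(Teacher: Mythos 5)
Your proof is correct: the maximal $\epsilon_0$-separated set is automatically an $\epsilon_0$-net by maximality, and the volume comparison of the disjoint balls $B(q,\epsilon_0/2)$ inside $B(0,1+\epsilon_0/2)$ gives exactly the bound $(1+2/\epsilon_0)^d$. The paper does not prove this lemma at all---it is quoted directly from \citet{Pisier89}---and your volumetric packing argument is the standard proof found there, so there is nothing further to reconcile.
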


\begin{proof}[Proof of Lemma~\ref{lemma:covariance-subgaussian}]
Let $\hat\Sig := (1/n) \sum_{i=1}^n x_ix_i^\top$, let $\mathcal{S}^{d-1} :=
\{ \alpha \in \R^d : \|\alpha\|_2 = 1 \}$ be the unit sphere in $\R^d$, and
let $Q \subset \mathcal{S}^{d-1}$ be an $\epsilon_0$-cover of
$\mathcal{S}^{d-1}$ of minimum size with respect to $\|\cdot\|_2$.
By Lemma~\ref{lemma:pisier}, the cardinality of $Q$ is at most
$(1+2/\epsilon_0)^d$.
Let $E$ be the event
\[
\max \left\{ |q^\top (\hat\Sig - I) q| : q \in Q \right\} \leq
\epsilon_{\epsilon_0,\delta,n}
.
\]
By Lemma~\ref{lemma:rayleigh} and a union bound, $\Pr[E] \geq 1-\delta$.
Now assume the event $E$ holds.
Let $\alpha_0 \in \mathcal{S}^{d-1}$ be such that
$|\alpha_0^\top (\hat\Sig - I) \alpha_0| = \max\{ |\alpha^\top
(\hat\Sig - I) \alpha| : \alpha \in \mathcal{S}^{d-1} \} = \|\hat\Sig-I\|_2$.
Using the triangle and Cauchy-Schwarz inequalities, we have
\begin{align*}
\|\hat\Sig-I\|_2
= |\alpha_0^\top (\hat\Sig - I) \alpha_0|
& = \min_{q\in Q} |q^\top (\hat\Sig - I) q
+ \alpha_0^\top (\hat\Sig - I) \alpha_0 - q^\top (\hat\Sig - I) q| \\
& \leq \min_{q\in Q} |q^\top (\hat\Sig - I) q|
+ |\alpha_0^\top (\hat\Sig - I) \alpha_0 - q^\top (\hat\Sig - I) q| \\
& = \min_{q\in Q} |q^\top (\hat\Sig - I) q| + |\alpha_0^\top (\hat\Sig - I)
(\alpha_0-q) - (q-\alpha_0)^\top (\hat\Sig - I) q| \\
& \leq \min_{q\in Q} |q^\top (\hat\Sig - I) q| + \|\alpha_0\|_2 \|\hat\Sig
- I\|_2 \|\alpha_0-q\|_2 + \|q-\alpha_0\|_2 \|\hat\Sig - I\|_2 \|q\|_2 \\
& \leq \epsilon_{\epsilon_0,\delta,n} + 2\epsilon_0\|\hat\Sig - I\|_2
\end{align*}
so
\[
\max\left\{ \lambda_{\max}(\hat\Sig) - 1, \ 1 - \lambda_{\min}(\hat\Sig)
\right\}
= \|\hat\Sig-I\|_2
\leq \frac1{1-2\epsilon_0} \cdot \epsilon_{\epsilon_0,\delta,n}
.
\]
\end{proof}

\end{document}